\numberwithin{equation}{section}
\newtheorem{theorem}{Theorem}[section]
\newtheorem{lemma}[theorem]{Lemma}
\newtheorem{corollary}[theorem]{Corollary}
\newtheorem*{theorem*}{Theorem}
\theoremstyle{definition}
\newtheorem{example}[theorem]{Example}
\newtheorem{remark}[theorem]{Remark}
\begin{document}
\title[The Waldschmidt constant of special fat flat subschemes in $\mathbb{P}^N$]{The Waldschmidt constant of special fat flat subschemes in $\mathbb{P}^N$}

\author{Hassan Haghighi, Mohammad Mosakhani}
\address{Hassan Haghighi, Mohammad Mosakhani, \ Faculty of Mathematics, K. N. Toosi
University of Technology, Tehran, Iran.} \email{haghighi@kntu.ac.ir,
mosakhani@aut.ac.ir}

\keywords{Configuration of linear subspaces, Star configuration,
Symbolic Power, Waldschmidt Constant, Fat points}

\subjclass[2020]{Primary 14N20, 13A02; Secondary 14N05, 13F20}

\begin{abstract}
  The purpose of this paper is to construct some special kind of
subschemes in $\mathbb{P}^N$ with $ N\ge 3$, which we call them "fat
flat subschemes" and  compute their Waldschmidt constants. These
subschemes are constructed by adding, in a particular way, a finite
number of linear subspaces of $\mathbb{P}^N$ of many different
dimensions to a star configuration in $\mathbb{P}^N$, with arbitrary
preassigned multiplicities to each one of these linear subspaces, as
well as the star configuration. Among other things, it will be shown
that for every positive integer $d$, there are infinitely many fat
flat subschemes in $\mathbb{P}^N$ with the Waldschmidt constant
equal to $d$. In addition to this, for any two integers $1\le a<b$,
we also construct a fat flat subscheme of the above type in some
projective space $\mathbb{P}^M$, which its Waldschmidt constant is
equal to $b/a$. In addition to these, all non-reduced fat points
subschemes $Z$ in $\mathbb{P}^2$ with the Waldschmidt constants less
than $5/2$ are classified.
\end{abstract}
\maketitle

$\vspace{5mm}$
\section{Introduction}
Let $I$ be a non-trivial homogeneous ideal of
$\mathbb{K}[\mathbb{P}^N]$, the coordinate ring of the projective
space $\mathbb{P}^N$ and let $\alpha(I)=\min \{t \mid I_t\neq 0\}$.
The ideal $I$ can be used to construct two important families of
ideals in $\mathbb{K}[\mathbb{P}^N]$ so that each family, reveals
more algebraic and geometric properties of $I$. One of these
families is constructed by ordinary powers of $I$, which for each
positive integer $m$, the generators of the $m$-th ordinary power of
$I$  can be constructed just by $m$-fold multiplications of elements
of a set of minimal generators of $I$. As an immediate consequence
this construction, one has $\alpha(I^m)=m\alpha(I)$. The other
family is the $m$-th symbolic power of $I$, which is defined as
\begin{align*}
I^{(m)}=\bigcap_{P\in
\text{Ass}(I)}\left(\mathbb{K}[\mathbb{P}^N]\cap
I^m\mathbb{K}[\mathbb{P}^N]_P\right),
\end{align*}
where $\text{Ass}(I)$ is the set of associated prime ideals of $I$
and the intersection is taken in the field of fractions of
$\mathbb{K}[\mathbb{P}^N]$. Since in the process of  constructing of
the $m$-th symbolic power of $I$, one should put away those embedded
primary components of $I^m$ which  emerges during the construction
of the ordinary powers,  the generators of $I^{(m)}$, contrary to
the generators of $I^m$, can not be determined easily, even
computing $\alpha(I^{(m)})$,  is not a simple task.  So, to
investigate the structure of $I^{(m)}$, a natural approach is to
compare it with different ordinary powers of  $I$ which has simpler
structure. At the first step, it is clear that $I^r \subseteq
I^{(m)}$ if and only if $ r\ge m$, however it is not at all clear
that for which pairs of positive integers $(m,r)$ the inclusion
$I^{(m)}\subseteq I^r$ hold. The problem of determining those pairs
of integers $(m,r)$ for which the inclusion $I^{(m)}\subseteq I^r$
hold, is known as {\it the containment problem}, which has
stimulated a great deal of research activities in recent years (see
 \cite{BH, SZSZ} and references therein) to specify the structure of
those pairs $(m,r)$ for which $I^{(m)}\subseteq I^r$. In \cite{ELS},
it is proved that $I^{(hr)} \subseteq I^r$, where
$h=\max\{\text{ht}(P)\mid P\in \text{Ass}(I)\}$. Specially,
$I^{(Nr)}\subseteq I^r$. Hence, for a fixed $r$, there are only a
finite number of integers $m$, such that $I^{(m)} \nsubseteq I^r$.
Moreover, even though the behavior of the sequence
$\{\alpha(I^{(m)})\}_{m\in \mathbb{N}}$ can not be determined
explicitly, but the sequence $\{\frac{1}{m}I^{(m)}\}_{m\in
\mathbb{N}}$ is convergent and its limit
\begin{align*}
\widehat{\alpha}(I)=\lim_{m\longrightarrow \infty}
\frac{\alpha(I^{(m)})}{m}=\inf \big \{\frac{\alpha(I^{(m)})}{m} \mid
m \in \mathbb{N}\big \}.
\end{align*}
is called the Waldschmidt constant of $I$, which is an asymptotic
invariant of $I$. Geometrically, this constant asymptotically
measures the degree of hypersurfaces passing through the closed
subscheme defined by $I$ in $\mathbb{P}^N$. A significant role of
this constant regarding the containment problem is that whenever
$\frac{m}{r}<\frac{\alpha(I)}{\widehat{\alpha}(I)}$, then
$I^{(mt)}\nsubseteq I^{rt}$ for all $t\gg 0$ (see \cite[Lemmas
2.3.1, and 2.3.2]{BH}).

Computing this constant is not an easy task and its value for  for a
general ideal is  not known  at all. The most ideals which this
constant is computed for them are the ones for which the ideal $I$
is the ideal of a finite set of reduced points in projective spaces
(see e.g., \cite{BH, DST1,DST} ), or a finite set of reduced linear
subspaces of projective spaces (see e.g., \cite{BCGHJNSVV, BH,
DHST}). Moreover, whenever $I$ is the defining ideal of a finite set
of points (usually non-reduced), finding lower bounds for
$\alpha(I^{(m)})/m$ are the subject of intriguing conjectures (see
\cite{Chang-Jow, MSSZ}).

In this paper, our contribution to this subject is to compute the
Waldschmidt constant of schemes $X$ defined by the ideals
$I=\bigcap_{i=1}^t I(L_i)^{m_i}$, where $L_i$'s are arbitrary linear
subspaces of $\mathbb{P}^N$ of different dimensions, none of which
contains another, with preassigned multiplicities $m_i\ge 1$. These
type of schemes are referred as {\it fat flat subschemes} (see
\cite[page 401]{BH}). These schemes may be supposed as a natural
generalization of fat points schemes accompanied by what is known as
{\it star configuration}.

Our motivation for investigating these types of schemes was to
generalize the main result of \cite[Teorem B]{MH}, which says that
for every positive integer $d$, there exists a configuration of
points $Z$ in $\mathbb{P}^2$ for which its Waldschmidt constant is
equal to $d$, to higher dimensional projective spaces. Fortunately,
in what follows, this result is extended to a more general family of
closed subschemes of $\mathbb{P}^N$, where $N\ge 3$, i.e., the
subschemes which are supported at the union of finite linear
subspaces of different dimensions in $\mathbb{P}^N$. The main result
of this paper is:
\\\\
{\bf Theorem A.} {\it For every positive integer $d$, there exists a
fat flat subscheme $W$ in $\mathbb{P}^N$, such that for every
positive integer $k$, $\alpha(I(W)^{(k)})=dk$. In particular,
$\widehat{\alpha}(I(W))=d$.}

On the other hand, the question of identifying those closed
subschemes with the given Waldschmidt constant is also of great
interest. In the process of proving the above theorem, we
encountered to a finite set of non-reduced points in $\mathbb{P}^2$
with the Waldschmidt constant equal to two. This observation, at
first, motivated us to identify all non-reduced fat points
subschemes $Z$ in $\mathbb{P}^2$ supported at a finite set of
distinct points, where the defining ideal of them has the least
Waldschmidt constants, i.e., $\widehat{\alpha}(I(Z))=2$.
Fortunately, we were able to identify
 those  closed subschemes of the mentioned type with the Waldschmidt
constant of less than $5/2$. The following theorem describes this
classification.
\\\\
{\bf Theorem B.} {\it Let $Z$ be a finite set of non-reduced closed
points in $\mathbb{P}^2$ with defining ideal $I(Z)$. Then
$\widehat{\alpha}(I(Z))<5/2$ if and only if $Z$ is as one of the
following forms
\begin{enumerate}
\item[a)] $Z=2(p_1+ \dots +p_r) +(p_{r+1} + \dots + p_{r+s})$,
where $r \geq 1$ and $s \geq 0$, such that all of these points are
contained in a line and $\widehat{\alpha}(I(Z))=2$, or;
\item[b)]$Z=(p_1+p_2+ \dots +p_{r+s}) +2p_0$, with $r,s \geq 1$,
such that $p_1, \dots, p_r$ lie on a line $L_1$, and $p_{r+1}, \dots
, p_{r+s}$ lie on another line $L_2$ and $p_0$ is  the intersection
of these two lines and $\widehat{\alpha}(I(Z))=2$, or;
\item[c)]$Z=2p_1+p_2+p_3+p_4$, such that $p_1$ lie on a line and
$p_2,p_3,p_4$ lie on another distinct line and
$\widehat{\alpha}(I(Z))=7/3$.
\end{enumerate}}

At the end of the final section, by giving a few  examples, we show
that to push forward the above classification to $5/2$ and  beyond
of it, more tools and notions are needed to be developed.
\section{A special kind of configuration of linear subspaces in $\mathbb{P}^N$}
Let $N\ge 3$, be an integer. In this section,  we construct a large
family of fat flat subschemes in $\mathbb{P}^N$. These subschemes
 will be used to prove Theorem A.

$(*)$: Let $H_1, \dots , H_s$ be a set of $s$ general hyperplanes in
$\mathbb{P}^N$ and let $e$ be an integer, such that $1 \leq e \leq
N$ and $e \leq s$. For each $i=1, \dots ,{\binom{s}{e}}$, let $L_i=
\bigcap_{j \in \{ i_1 , \dots , i_e \}} H_j$, where $\{ i_1, \dots,
i_e \} \subset \{1, \dots , s\}$. We borrow the notation $S_N(e,s)$
from \cite{BH} to denote the configuration of ${\binom{s}{e}}$
linear subspaces $\{ L_1, \dots , L_{{\binom{s}{e}}} \}$ of
codimension $e$ in $\mathbb{P}^N$, which is known as {\it a star
configuration} (see \cite{GHM} for more properties of these type of
configurations). Let $I=I(S_N(e,s))$ be the defining ideal of
$S_N(e,s)$ and let $m$ be an arbitrary positive integer. We denote
by $mS_N(e,s)$ the closed subscheme of $\mathbb{P}^N$ associated to
the ideal $I^{(m)}$.

As an immediate consequence of \cite[Lemma 2.4.1]{BH},  the
Waldschmidt constant of the ideal $J=I(mS_N(e,s))$ can be computed
as follows.

\begin{lemma}\label{lem}
Let $J$ be the  ideal associated with the scheme $mS_N(e,s)$. Then
$\widehat{\alpha}(J) =\frac{ms}{e}$.
\end{lemma}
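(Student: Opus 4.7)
The plan is to reduce the computation to the known value $\widehat{\alpha}(I(S_N(e,s)))=s/e$, which is exactly the content of the cited \cite[Lemma 2.4.1]{BH}, via a clean identification of the symbolic powers of $J=I^{(m)}$ with those of $I=I(S_N(e,s))$.

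First I would describe the primary decomposition of $I$. Each linear subspace $L_i$ has codimension $e$ in $\mathbb{P}^N$ and is cut out scheme-theoretically by the $e$ linear forms defining the hyperplanes $H_{i_1},\dots,H_{i_e}$. Hence $I(L_i)$ is a complete intersection of linear forms, which in particular is a regular prime ideal, so $I(L_i)^{(k)}=I(L_i)^k$ for every $k\ge 1$. Since $I=\bigcap_{i=1}^{\binom{s}{e}} I(L_i)$ is an intersection of these primes with no embedded components, its symbolic powers are
\begin{equation*}
I^{(k)} = \bigcap_{i=1}^{\binom{s}{e}} I(L_i)^{k}.
\end{equation*}
In particular, $J=I^{(m)}=\bigcap_i I(L_i)^m$, and the set of associated primes of $J$ is still $\{I(L_i)\}$.

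Next I would iterate the same argument for $J$ itself. Because each primary component $I(L_i)^m$ of $J$ satisfies $(I(L_i)^m)^{(k)} = I(L_i)^{mk}$ (again by the complete intersection property), the $k$-th symbolic power of $J$ is
\begin{equation*}
J^{(k)} \;=\; \bigcap_{i=1}^{\binom{s}{e}} I(L_i)^{mk} \;=\; I^{(mk)}.
\end{equation*}
This key identification $J^{(k)} = I^{(mk)}$ is the only technical point and is the step I would be most careful with; it is what lets us convert asymptotic information about $I$ into asymptotic information about $J$.

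Finally, taking $\alpha$ on both sides and passing to the limit,
\begin{equation*}
\widehat{\alpha}(J) \;=\; \lim_{k\to\infty}\frac{\alpha(J^{(k)})}{k}
\;=\; \lim_{k\to\infty}\frac{\alpha(I^{(mk)})}{k}
\;=\; m\,\lim_{k\to\infty}\frac{\alpha(I^{(mk)})}{mk}
\;=\; m\,\widehat{\alpha}(I).
\end{equation*}
Invoking \cite[Lemma 2.4.1]{BH} to substitute $\widehat{\alpha}(I)=s/e$ yields the desired equality $\widehat{\alpha}(J)=ms/e$. The main (and essentially only) obstacle is verifying the symbolic power identity $J^{(k)}=I^{(mk)}$; once this is in hand, the rest is a routine rescaling in the definition of the Waldschmidt constant.
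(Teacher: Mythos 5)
Your proof is correct and follows essentially the same route as the paper: both arguments hinge on the identification $(I^{(m)})^{(k)}=I^{(mk)}$ and then reduce to the Bocci--Harbourne computation for the star configuration (the paper evaluates $\alpha(I^{(mke)})=mks$ along the subsequence of multiples of $e$ via \cite[Lemma 2.4.1]{BH}, while you invoke $\widehat{\alpha}(I)=s/e$ directly, which is really \cite[Theorem 2.4.3]{BH} rather than Lemma 2.4.1). Your justification of $J^{(k)}=I^{(mk)}$ via the complete-intersection primary components is a welcome addition, since the paper uses that identity without comment.
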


\begin{proof}
It is known that $\widehat{\alpha}(J) = \underset{k \rightarrow
\infty}{\lim} \frac{\alpha(J^{(ke)})}{ke}= \underset{k \rightarrow
\infty}{\lim} \frac{\alpha(I^{(m)(ke)})}{ke}= \underset{k
\rightarrow \infty}{\lim} \frac{\alpha(I^{(mke)})}{ke}$. Since $mke$
is a multiple of $e$, by \cite[Lemma 2.4.1]{BH}, we have
$\alpha(I^{(mke)})=\frac{mkes}{e}=mks$. Thus
$\widehat{\alpha}(J)=\frac{ms}{e}$.
\end{proof}

Now, we are ready to construct a large family of fat flat subschemes
in $\mathbb{P}^N$. We also compute the Waldschmidt constant of the
defining ideals of these subschemes.

{\bf Construction: }Let $S_N(e,s)$ be the star configuration in
$\mathbb{P}^N$ constructed as $e$-wise intersection of $s$ general
hyperplanes $\{H_1,\dots, H_s\}$ and let $I$ be its defining ideal.
Let $n_i$, whit $2 \leq i \leq N$, be a non-negative integer. For
each $i=2, \dots, N$, let the set $T_i= \{ M_{i,1}, \dots ,
M_{i,n_i} \}$, consists of $n_i$ distinct linear subspaces of
codimension $i$ in $\mathbb{P}^N$ and contained in
$\bigcup_{j=1}^{s} H_j$. Moreover, let $S_N(e,s)\cap
T_i=\varnothing$ and  $T_i \cap T_j=\varnothing$ for each $2 \leq i
< j \leq N$. Let
\begin{align}\label{equ}
I=\bigcap_{i=2}^{N}\big( \bigcap_{r_i=1}^{n_i}
I(M_{i,r_i})^{(m_{i,r_i})} \big) \bigcap I^{(m)}
=I(W^{\prime})^{(m)} \bigcap I^{(m)},
\end{align}
where $0 \leq m_{i,r_i} \leq [\frac{m}{e}]$ (here, for any real
number $a$, $[a]$ denotes the greatest integer less than or equal to
$a$). Since each linear subspace $M_{i,r_i}$ is a complete
intersection,  every power of the  ideal $I(M_{i,r_i})$ is unmixed
and hence is saturated. So, $I$ is also saturated. Let $W_m$ be the
fat flat subscheme of $\mathbb{P}^N$ corresponds to $I$.
Formally, we may denote the scheme $W_m$ in the following form
\begin{align*}
W_m &:= \big( \sum_{r_2=1}^{n_2} m_{2,r_2} M_{2,r_2} + \dots +
\sum_{r_N=1}^{n_N} m_{N,r_N} M_{N,r_N} \big) + \big( mL_1 + \dots +
mL_{{\binom{s}{e}}} \big)
\\&
=W^{\prime }+mS_N(e,s).
\end{align*}
We also denote the  ideal $I$ defined in \eqref{equ}  by $I(W_m)$.

\begin{remark}\label{W1}
Let $W_m=W^{\prime }+mS_N(e,s)$ be the fat flat subscheme of
$\mathbb{P}^N$ defined as above. This scheme may be non-reduced,
when $m \geq 2$, or may be reduced, whenever $m=1$. In the latter
case, all integers $m_{i,r_i}$ would be zero and hence
$W'=\varnothing$. Consequently, $W_1$ would be just the star
configuration $S_N(e,s)$.
\end{remark}

Let $I(W_m)$ be the corresponding homogeneous ideal of the fat flat
subscheme $W_m=W^{\prime }+mS_N(e,s)$ in $\mathbb{K}
[\mathbb{P}^N]$. In the following theorem, the Waldschmidt constant
of $I(W_m)$ is computed. It would be interesting to note that the
number $\widehat{\alpha}(I(W_m))$ depends only to  the scheme
$mS_N(e,s)$ and it is independent of the scheme $W^\prime$. In
particular, we obtain an infinite family of examples of non-
isomorphic fat flat subschemes  of $\mathbb{P}^N$, such that they
have the same support and  Waldschmidt constant.

\begin{theorem}\label{th}
Let $W_m=W^{\prime }+mS_N(e,s)$ be a fat flat subscheme of
$\mathbb{P}^N$ which constructed as above. Then
$\widehat{\alpha}(I(W_m))=\frac{ms}{e}$.
\end{theorem}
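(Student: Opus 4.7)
The plan is to prove $\widehat{\alpha}(I(W_m))=\tfrac{ms}{e}$ by a matching upper and lower bound, leveraging the constraint $m_{i,r_i}\leq \lfloor m/e\rfloor$ together with the fact that each $M_{i,r_i}$ lies inside some hyperplane $H_j$. The first preparatory step is to describe the symbolic powers of $I(W_m)$ explicitly. Because each $M_{i,r_i}$ and each star configuration component $L_j$ is a linear subspace, its defining ideal is generated by a regular sequence of linear forms, and all of its ordinary powers are primary and already equal to its symbolic powers. Combined with a primary decomposition of $I(W_m)$, this yields
\begin{align*}
I(W_m)^{(k)} \;=\; \Bigl(\bigcap_{i=2}^{N}\bigcap_{r_i=1}^{n_i} I(M_{i,r_i})^{k m_{i,r_i}}\Bigr)\cap I^{(mk)},
\end{align*}
where $I=I(S_N(e,s))$.

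For the lower bound, the display above immediately gives $I(W_m)^{(k)}\subseteq I^{(mk)}$, so $\alpha(I(W_m)^{(k)})\geq \alpha(I^{(mk)})$. Specialising to $k=et$ and invoking \cite[Lemma 2.4.1]{BH} exactly as in the proof of Lemma \ref{lem},
\begin{align*}
\widehat{\alpha}(I(W_m)) \;\geq\; \lim_{t\to\infty}\frac{\alpha(I^{(emt)})}{et} \;=\; \lim_{t\to\infty}\frac{emts/e}{et} \;=\; \frac{ms}{e}.
\end{align*}

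For the upper bound, I would produce a single explicit polynomial witness of small degree. Put $F:=H_1H_2\cdots H_s$ and take $k$ to be any multiple of $e$; then $F^{km/e}$ is a well-defined form of degree $kms/e$. Since every $L_j$ is the intersection of exactly $e$ of the hyperplanes $H_1,\dots,H_s$, the product $F$ vanishes to order $e$ along $L_j$, so $F^{km/e}$ vanishes there to order $km$, matching the required multiplicity. Every $M_{i,r_i}$ is contained in at least one $H_j$ by the construction $(*)$, so $F^{km/e}$ vanishes along $M_{i,r_i}$ to order at least $km/e\geq k\lfloor m/e\rfloor\geq km_{i,r_i}$. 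Hence $F^{km/e}\in I(W_m)^{(k)}$, which gives $\alpha(I(W_m)^{(k)})\leq kms/e$ along the subsequence $k=et$, and thus $\widehat{\alpha}(I(W_m))\leq \tfrac{ms}{e}$. The main technical nuisance I anticipate is the primary decomposition of $I(W_m)^{(k)}$ in the presence of possible containments among the $M_{i,r_i}$ and $L_j$ (for example some $L_j$ lying inside an $M_{i,r_i}$ of codimension $i<e$), which may render some primary components redundant; however, any such redundancy leaves the geometric multiplicity along each $L_j$ unchanged at $m$, so the inclusion $I(W_m)^{(k)}\subseteq I^{(mk)}$ driving the lower bound and the explicit witness $F^{km/e}$ driving the upper bound both survive unharmed.
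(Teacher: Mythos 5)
Your proposal is correct and follows essentially the same route as the paper: the lower bound comes from the containment of $I(W_m)^{(k)}$ in the symbolic powers of the star-configuration ideal (the paper phrases this as $I(W_m)\subseteq I(mS_N(e,s))$ together with Lemma~\ref{lem}), and the upper bound uses the same witness $H_1^m\cdots H_s^m$ (your $F^{km/e}$ with $k=e$), whose membership in $I(W_m)^{(e)}$ is guaranteed by $m_{i,r_i}\leq[\frac{m}{e}]$ and the containment of each $M_{i,r_i}$ in some $H_j$. Your explicit description of $I(W_m)^{(k)}$ is a slightly more detailed justification of the containment step than the paper gives, but the argument is the same.
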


\begin{proof}
We keep the notations as in $(*)$. Let $G$ be an element of
$I(W_m)$. Then $G$ vanishes on $W_m$. Specifically, it vanishes on
$mS_N(e,s)$, hence $G \in I(mS_N(e,s))$, which yields $I(W_m)
\subseteq I(mS_N(e,s))$. In particular, by Lemma \ref{lem},
$\widehat{\alpha}(I(W_m)) \geq \widehat{\alpha}(I(mS_N(e,s)))=
\frac{ms}{e}$. Now, let $F$ be the polynomial $F=H_1^m \dots H_s^m$
in $\mathbb{K}[\mathbb{P}^N]$. Since $F$ vanishes to order $me$ on
each  linear subspace of $\{ L_1, \dots , L_{{\binom{s}{e}}} \}$,
hence $F \in I(S_N(e,s))^{(me)}=I(mS_N(e,s))^{(e)}$. Also, $F$
vanishes to order $m$ on $\bigcup_{i=2}^{N} T_i$. Since we have
assumed that $m_{i,j} \leq [\frac{m}{e}]$, then $F \in
I(W^{\prime})^{(e)}$. Therefore, by \eqref{equ}, we have $F \in
I(W_m)^{(e)}$ and by subadditivity property of the Waldschmidt
constant, $\widehat{\alpha}(I(W_m)) \leq
\frac{\alpha(I(W_m)^{(e)})}{e} \leq \frac{\deg F}{e}=\frac{ms}{e}$.
These imply, $\widehat{\alpha}(I(W_m))= \frac{ms}{e}$.
\end{proof}

In \cite[Theorem 2.4.3]{BH}, it is shown that the Waldschmidt
constant of $S_N(e,s)$ is equal to $s/e$. On the other hand, as the
remark \eqref{W1} shows, the scheme $W_1$ is just $S_N(e,s)$, and
hence by theorem \ref{th} its Waldschmidt constant is $s/e$. This
observation merits to be stated as a corollary of Theorem \ref{th}.
\begin{corollary}\label{gene_BH}
The Waldschmidt constant of the scheme $W_m=W'+mS_N(e,s)$ for $m=1$
is equal to $s/e$.
\end{corollary}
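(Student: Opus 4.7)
The plan is to simply specialize Theorem \ref{th} to the case $m=1$, using the observation already recorded in Remark \ref{W1}. First I would note that the constraint $0 \leq m_{i,r_i} \leq [\frac{m}{e}]$ from the construction, combined with the standing assumption $e \leq N$ (in particular $e \geq 2$, since $e$ is the codimension of the components of the star configuration and $s \geq e$ in a nontrivial setting), forces $[\frac{1}{e}] = 0$ and hence every multiplicity $m_{i,r_i}$ must be zero when $m=1$. Consequently the auxiliary subscheme $W^{\prime}$ is empty and $W_1 = S_N(e,s)$, i.e.\ the pure star configuration with no extra linear components attached.

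Once that identification is made, the conclusion follows by a one-line application of Theorem \ref{th}: setting $m=1$ in the formula $\widehat{\alpha}(I(W_m)) = \frac{ms}{e}$ gives $\widehat{\alpha}(I(W_1)) = \frac{s}{e}$, which is exactly the assertion. No separate computation is needed, since Theorem \ref{th} has already been established in full generality.

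There is essentially no obstacle here, since the corollary is a direct specialization. The only minor subtlety worth spelling out, so the reader does not worry about a potential gap, is the range-of-multiplicities argument that makes $W^{\prime}$ vanish when $m=1$; once that is pointed out, invoking Theorem \ref{th} closes the proof immediately and simultaneously recovers \cite[Theorem 2.4.3]{BH} as a special case of our framework.
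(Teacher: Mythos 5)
Your proof is correct and follows essentially the same route as the paper: Remark \ref{W1} identifies $W_1$ with the star configuration $S_N(e,s)$, and Theorem \ref{th} with $m=1$ immediately gives $\widehat{\alpha}(I(W_1))=s/e$. (Your parenthetical claim that $e\ge 2$ is not actually among the paper's hypotheses --- the construction allows $1\le e\le N$, and for $e=1$ one has $[\frac{1}{e}]=1$ so $W'$ need not vanish --- but this is harmless, since Theorem \ref{th} yields $\frac{ms}{e}=\frac{s}{e}$ regardless of whether $W'$ is empty.)
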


In the following lemma, which  will be used to prove  Theorem A, we
provide a criterion that shows when for a homogeneous ideal $I$ of
$\mathbb{K}[\mathbb{P}^N]$,  the equality $\alpha(I^{(m)})=tm$, for
some integer $t$ and for all $m \geq 1$, holds.  As a result, the
equality $\alpha(I)=\widehat{\alpha}(I)$ would completely determine
the elements of the sequence $\{\alpha(I^{(m)}\}_{m\in \mathbb{N}}$,
as well as we can conclude that when the sequence $\{
\alpha(I^{(m+1)}) - \alpha(I^{(m)}) \}_{m \in \mathbb{N}}$, would be
constant. This sequence, which is known as $\beta-$sequence of $I$,
is used recently to study the geometry of points in $\mathbb{P}^2$
(see \cite{DST, DST1, BCh}).

\begin{lemma}\label{lem 2}
Let $I$ be a homogeneous ideal of $\mathbb{K}[\mathbb{P}^N]$. Then
the equality $\alpha(I^{(m)})= tm$ holds for some integer $t$ and
for all $m \geq 1$ if and only if $\alpha(I)=\widehat{\alpha}(I)=t$.
\end{lemma}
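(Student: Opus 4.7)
The plan is to prove both directions using only two elementary facts recalled in the introduction: the identity $\alpha(I^m)=m\,\alpha(I)$ for ordinary powers, and the characterization $\widehat{\alpha}(I)=\inf_m \alpha(I^{(m)})/m$ (so in particular $\alpha(I^{(m)})\ge m\,\widehat{\alpha}(I)$ for every $m$). The containment $I^m\subseteq I^{(m)}$, which gives the reverse bound $\alpha(I^{(m)})\le \alpha(I^m)$, is also standard.

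For the forward implication, I would simply specialize. Assuming $\alpha(I^{(m)})=tm$ for all $m\ge 1$, setting $m=1$ gives $\alpha(I)=t$. Then
\begin{align*}
\widehat{\alpha}(I)=\lim_{m\to\infty}\frac{\alpha(I^{(m)})}{m}=\lim_{m\to\infty}\frac{tm}{m}=t,
\end{align*}
so $\alpha(I)=\widehat{\alpha}(I)=t$, as claimed.

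For the backward implication, I would prove the two inequalities $\alpha(I^{(m)})\ge tm$ and $\alpha(I^{(m)})\le tm$ separately. The lower bound is immediate from $\widehat{\alpha}(I)=\inf_m \alpha(I^{(m)})/m=t$, which forces $\alpha(I^{(m)})/m\ge t$, i.e.\ $\alpha(I^{(m)})\ge tm$. The upper bound comes from $I^m\subseteq I^{(m)}$: any element of $I^m$ lies in $I^{(m)}$, so the minimal degree can only decrease, giving $\alpha(I^{(m)})\le \alpha(I^m)=m\,\alpha(I)=mt$. Combining the two inequalities yields $\alpha(I^{(m)})=tm$.

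There is no serious obstacle; the statement is essentially a tautology once one has in hand the identity $\alpha(I^m)=m\,\alpha(I)$ and the infimum description of $\widehat{\alpha}(I)$. The only point worth emphasising in the write-up is that the hypothesis $\alpha(I)=\widehat{\alpha}(I)$ is precisely what makes the ordinary-power upper bound match the asymptotic lower bound, collapsing the interval $[\,m\,\widehat{\alpha}(I),\,m\,\alpha(I)\,]$ in which $\alpha(I^{(m)})$ always lies to the single value $tm$.
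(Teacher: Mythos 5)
Your proposal is correct and uses exactly the same two ingredients as the paper's proof: the bound $\alpha(I^{(m)})/m \ge \widehat{\alpha}(I)$ from the infimum description, and the bound $\alpha(I^{(m)}) \le \alpha(I^m) = m\,\alpha(I)$ from $I^m \subseteq I^{(m)}$. The only cosmetic difference is that the paper packages the converse as a two-case proof by contradiction, whereas you prove the two inequalities directly; the content is identical.
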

\begin{proof}
Clearly, if $\alpha(I^{(m)})= tm$, for all $m \geq 1$, then
$\alpha(I)= \widehat{\alpha}(I)=t$. Conversely, on the contrary, let
there exist an integer $k$ such that $\alpha(I^{(k)}) \neq tk$. Two
cases may arise. First, assume that $\alpha(I^{(k)}) < tk$. Thus
$t=\widehat{\alpha}(I) \leq \frac{\alpha(I^{(k)})}{k} <
\frac{tk}{k}=t$, which is a contradiction. In the second case, let
$\alpha(I^{(k)}) > tk$. Therefore, $tk < \alpha(I^{(k)}) \leq
\alpha(I^k)=k\alpha(I)=tk$, which gives again a contradiction.
\end{proof}

\begin{example}
To show examples of homogeneous ideals which satisfy the hypothesis
of Lemma \ref{lem 2}, let $W_1=S(e,s)$ be a star configuration. By
\cite[Lemma 2.4.2]{BH}, $\alpha(I(W_1))=s-N+1$ and
$\widehat{\alpha}(I(W_1))=s/e$ by  Corollary \ref{gene_BH}. So, the
equality $\alpha(I(W_1))=\widehat{\alpha}(I(W_1))$ would be hold if
and only if $s=e(N-1)/(e-1)$. Hence, it is enough to choose
parameters $e$ and $N$ such that $e-1$ is a proper divisor of $N-1$.
\end{example}

Now, we are in the position where by using Theorem \ref{th} and
Lemma \ref{lem 2}, we can prove the first main result of this paper.
In the following proof, we keep the notations used in $(*)$.
\\\\
{\bf Proof of Theorem A.} Let $d=st$ be a decomposition of $d$. We
choose an integer $e$ such that $1 \leq e \leq N$ and $e \leq s$.
Let $m=et$ and let
\begin{align*}
W_m &=\big( \sum_{r_2=1}^{n_2} m_{2,r_2} M_{2,r_2} + \dots +
\sum_{r_N=1}^{n_N} m_{N,r_N} M_{N,r_N} \big) + \big( mL_1 + \dots +
mL_{{\binom{s}{e}}} \big)
\\&
=W^{\prime }+mS_N(e,s),
\end{align*}
be the fat flat subscheme of $\mathbb{P}^N$, which is constructed as
in the above construction. With a similar argument  as in the proof
of Theorem \ref{th}, we have $I(W_m) \subseteq I(mS_N(e,s))$. In
particular, by \cite[Lemma 2.4.1]{BH}, $d = \alpha(I(mS_N(e,s)))
\leq \alpha(I(W_m))$. Now, let $G$ be the polynomial $G=H_1^t \cdots
H_s^t$ in $\mathbb{K}[\mathbb{P}^N]$. Then $G$ vanishes to order
$m=et$ on each of the linear subspaces in $\{ L_1, \dots ,
L_{{\binom{s}{e}}} \}$, which implies $G \in I(mS_N(e,s))$.
Moreover, $G$ vanishes to order $t$ on each linear subspace in the
set $T_i$, with $2 \leq i \leq N$. Since we have assumed that
$m_{i,j} \leq t$, it follows that $G \in I(W^\prime)$. Also, since
$W_m=W^{\prime }+mS_N(e,s)$, hence $G \in I(W_m)$, which implies
$\alpha(I(W_m)) \leq \deg G =st=d$. Therefore, $\alpha(I(W_m))=d$.
Also, by Theorem \ref{th}, $\widehat{\alpha}(I(W_m))=d$. Now, Lemma
\ref{lem 2} yields, $\alpha(I^{(m)}(W_m))=dm$ for all $m \geq 1$,
\endproof

\begin{remark}
Most of the fat flat subschemes of $\mathbb{P}^N$ which the
Waldschmidt constant of their defining ideals have been known up to
now  are supported at the linear subspaces of the same dimension and
are reduced. In this paper, the novel point  is that as the above
construction of fat flat subschemes shows, Theorem \ref{th} provides
a large collection of fat flat subschemes of $\mathbb{P}^N$ which
are supported at linear subspaces of different dimensions and the
Waldschmidt constant of each of these fat flat subschemes are equal
to the Waldschmidt constant of  its star configuration component
$mS_N(e,s)$.
\end{remark}

To classify  all configurations of reduced points in $\mathbb{P}^2$
with the Waldschmidt constant less than $9/4$, the authors of
\cite{DST}, introduced a special type of configuration of points, so
called {\it quasi-star configuration} to compute their Waldschmidt
constants. In the following example, we use the proof of Theorem A
to compute the Waldschmidt constant of the defining ideal of a
special kind of fat quasi-star configuration.

\begin{example}
Let $S_2(2,s)=p_1 + \dots + p_{\binom{s}{2}}$ be a star
configuration of points in $\mathbb{P}^2$ which is obtained by
pairwise intersection of $s \geq 2$ general lines $L_1, \dots , L_s$
in $\mathbb{P}^2$. Let $T= \{ q_1, \dots , q_s\}$ be a set of $s$
points, such that for each $i\in \{1,2,\dots,s\}$ the point $q_i$
lie on the line $L_i$ and also these points are not collinear. The
configuration $Z=\sum_{i=1}^{s}q_i + S_2(2,s)$ is called a quasi-
star configuration of points in $\mathbb{P}^2$. Now, let the scheme
$W_2=\sum_{i=1}^{s} q_i + 2S_2(2,s)$ be the fat quasi-star
configuration. Then, by proof of Theorem A,  for every positive
integer $k$, we have $\alpha(I(W_2)^{(k)})=sk$. In particular,
$\widehat{\alpha}(I(W_2)=s$. In other words, for every integer $s\ge
2$, there exists a fat flat subschemes in $\mathbb{P}^2$ with the
Waldschmidt constant equal to $s$.
\end{example}

\begin{center}
\begin{tikzpicture}
\draw [-] [line width=1.pt] (-.8,0)--(5.8,0); \draw [-] [line
width=1.pt] (2.2,2.44)--(.8,-2.74); \draw [-] [line width=1.pt]
(1.5,2.625)--(4.5,-2.925); \draw [-] [line width=1.pt]
(-1,.5)--(4.5,-2.25); \draw [-] [line width=1.pt]
(.5,-2.25)--(6,.4);
\draw [black] (6.2,0) node{$L_1$}; \draw [black] (5.9,.7)
node{$L_2$}; \draw [black] (2.5,2.2) node{$L_3$}; \draw [black]
(1.3,2.25) node{$L_4$}; \draw [black] (-1,.8) node{$L_5$};
\shade[ball color=blue] (0,0) circle (2.8pt); \shade[ball
color=blue] (5.2,0) circle (2.58pt); \shade[ball color=blue] (4,-2)
circle (2.8pt); \shade[ball color=blue] (1,-2) circle (2.8pt);
\shade[ball color=blue] (2,1.7) circle (2.8pt); \shade[ball
color=blue] (1.54,0) circle (2.8pt); \shade[ball color=blue]
(2.918,0) circle (2.8pt); \shade[ball color=blue] (2.5,-1.25) circle
(2.8pt); \shade[ball color=blue] (1.35,-.675) circle (2.8pt);
\shade[ball color=blue] (3.361,-.819) circle (2.8pt);
\shade[ball color=blue] (4.2,0) circle (1.8pt); \draw (4.2,.2)
node{$q_1$};

\shade[ball color=blue] (1.745,.8) circle (1.8pt); \draw (1.45, .8)
node{$q_3$};

\shade[ball color=blue] (2.48,.8) circle (1.8pt);\draw(2.78,.8)
node{$q_4$};

\shade[ball color=blue] (.64,-.31) circle (1.8pt);\draw(.64,-.55)
node{$q_5$};

\shade[ball color=blue] (4.2,-.46) circle (1.8pt); \draw(4.2,-.7)
node{$q_2$};
\draw [thick](3,-3.5) node {Figure 1. An illustration of the scheme
$W_2=\sum_{i=1}^{5} q_i + 2S_2(2,5)$ };
\end{tikzpicture}
\end{center}

By now, we were dealing with constructing of fat flat subschemes in
$\mathbb{P}^N$ with the Waldschmidt constant equal to a given
positive integer. It is natural to wonder if this result holds for
positive rational numbers. More specifically,  for a given rational
number $c\ge 1$, is there any fat flat subscheme $Z$ in
$\mathbb{P}^N$ such that $\widehat{\alpha}(I(Z))=c$? The answer is
that, in general, this is not possible. For example, let $c$ be a
rational number such that $2 < c < 9/4$. Main theorem of \cite{DST},
in the reduced case, and Theorem B, in the non-reduced case, show
that there are not any fat points subscheme in $\mathbb{P}^2$ with
the Waldschmidt constant equal to $c$. Therefore, as a simple
consequence of Theorem \ref{th}, we answer this question under the
extra condition that
 the  dimension of the ambient projective space depends on the rational number $c$.

\begin{theorem}\label{cor_wald}
Let $a$ and $b$ be two integers with $1 \leq a < b$. Then there
exists at least a fat flat subscheme $W$ in  some projective space
$\mathbb{P}^N$, such that $\widehat{\alpha}(I(W))=b/a$.
\end{theorem}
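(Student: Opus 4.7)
The plan is to obtain the desired Waldschmidt constant by taking a star configuration directly, viewed as the degenerate case $m=1$ of the construction in $(*)$. Recall that by Theorem~\ref{th}, the fat flat subscheme $W_m=W'+mS_N(e,s)$ satisfies $\widehat{\alpha}(I(W_m))=\frac{ms}{e}$, so the task reduces to selecting the parameters $(m,s,e,N)$ with $\frac{ms}{e}=\frac{b}{a}$ such that all requirements of $(*)$ are met.

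Given integers $1\le a<b$, I would set $e:=a$, $s:=b$, $m:=1$, and $N:=\max\{a,3\}$. These choices automatically satisfy the constraints of $(*)$: namely $N\ge 3$, $1\le e=a\le N$, and $e=a<b=s$, so the star configuration $S_N(e,s)$ obtained by $e$-wise intersection of $s$ general hyperplanes in $\mathbb{P}^N$ is well defined. Since $m=1$, Remark~\ref{W1} forces every multiplicity $m_{i,r_i}$ to be zero, hence $W'=\varnothing$ and $W_1=S_N(e,s)$ is just the reduced star configuration, which is in particular a fat flat subscheme of $\mathbb{P}^N$.

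Applying Theorem~\ref{th} (or directly Corollary~\ref{gene_BH}) to this fat flat subscheme yields
\[
\widehat{\alpha}(I(W_1)) \;=\; \frac{ms}{e} \;=\; \frac{s}{e} \;=\; \frac{b}{a},
\]
so setting $W:=W_1\subset \mathbb{P}^{\max\{a,3\}}$ completes the proof. There is no real obstacle in this argument: the theorem is a straightforward specialization of Theorem~\ref{th}, and the only thing to verify is that the parameter inequalities of the construction $(*)$ can always be realized, which is immediate from $1\le a<b$. Note that, as predicted in the paragraph preceding the theorem, the dimension $N$ of the ambient projective space genuinely depends on $b/a$ through the choice $N\ge a$, since the codimension of the linear subspaces forming the star configuration must be at most $N$.
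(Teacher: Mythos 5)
Your proof is correct and follows essentially the same route as the paper: both reduce the claim to Theorem~\ref{th} by choosing parameters with $\frac{ms}{e}=\frac{b}{a}$, and your uniform choice $m=1$, $e=a$, $s=b$ (i.e.\ the star configuration $S_N(a,b)$, with $N\ge\max\{a,3\}$) is exactly the paper's ``second case'' applied in all cases, which indeed suffices for the existence statement. The only thing your streamlined version omits is the paper's first case, whose purpose is not logical necessity but to additionally exhibit genuinely non-reduced examples (and infinitely many of them) whenever $b$ admits a nontrivial factorization $b=sm$ with $s\ge a$.
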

\begin{proof}
Choose an integer $N$ such that $a \leq N$. In the first case, let
there exists a non-trivial decomposition of $b$, say $b=sm$, such
that $a \leq s$. Let $W_m=W^{\prime }+mS_N(a,s)$ be the fat flat
subscheme of $\mathbb{P}^N$ constructed as above. Now, by Theorem
\ref{th}, we have $\widehat{\alpha}(I(W))=ms/a=b/a$, which implies
there exist infinitely many configurations of fat flat subschemes in
$\mathbb{P^N}$  such that their Waldschmidt constants are equal to
$b/a$. In the second case, let $b$ has no decomposition as above.
Now, let $W=S_N(a,b)$, i.e., a star configuration in $\mathbb{P}^N$.
Again, by Theorem \ref{th}, we have $\widehat{\alpha}(I(W))=b/a$.
\end{proof}
\begin{remark}
Theorem \ref{cor_wald} and its proof give a method to construct fat
flat subschemes of $\mathbb{P}^N$ whose the Waldschmidt constants
are equal to a given rational number greater than one. We can also,
by choosing suitable ambient spaces, extend this result to obtain
new fat flat subschemes with the same Waldschmidt constant. Let us
describe this method by an example. For the rational number $5/2$,
the proof of Theorem \ref{cor_wald} gives the fat flat subscheme
$W=S_N(2,5)$, for $N \geq 2$, which is a configuration of
codimension two linear subspaces of $\mathbb{P}^N$, with the
Waldschmidt constant equal to $5/2$. We can write $10/4=5/2$. Now
the proof of Theorem \ref{cor_wald} also gives the fat flat
subscheme $W_2=W^{\prime }+ 2S_N(4,5)$ in $\mathbb{P}^N$, for $N
\geq 4$. In this case, the support of $W_2$ is a union of linear
subspaces of codimension 4 in $\mathbb{P}^N$ with the same
Waldschmidt constant, i.e., it is $\widehat{\alpha}(I(W))=5/2$.
\end{remark}

\begin{remark}\label{remark}
Let  $n \geq 2$ be an integer and $W_2=(p_1+\dots + p_{n}) +
2p_0=W^\prime + 2S_2(2,2)$,  be a fat flat subscheme of
$\mathbb{P}^2$. Let the points of support of $W_2$ lie on a
reducible conic $L_1 \cup L_2$, where these points are not collinear
and let the point $p_0$ be the intersection of $L_1$ and $L_2$. In
fact, $W_2$ is a finite set of non-reduced points in $\mathbb{P}^2$.
By Theorem \ref{th}, $\widehat{\alpha}(I(W_2))=2$. In addition to
this, let $V^{'}=m_1q_1 + \dots +m_nq_n$, where $1 \leq m_i \leq 2$
and $q_1 , \dots , q_n$ are distinct points of a line $L$, be
another set of {\it non-reduced} points in $\mathbb{P}^2$. One can
see that $\widehat{\alpha}(I(V^{'}))=2$ (indeed, it is easy to show
that $I(L)^{2m}$, for every positive integer $m$, is the only
homogeneous polynomial of the least degree that belongs to
$I(V^{'})^{(m)}$). These observations motivate us to ask, is there
another set of non-reduced points in $\mathbb{P}^2$, different from
$W_2$ and $V^{'}$, such that its Waldschmidt constant  is equal to
two? In the next section, we show that this question may have a
negative answer.
\end{remark}
\section{ Proof of  Theorem B}
Let $Z=m_1p_1 + \dots + m_np_n$ be a fat points subscheme of
$\mathbb{P}^2$ and let $I(Z)$ be its corresponding homogeneous ideal
in $\mathbb{K}[\mathbb{P}^2]$. Computing the Waldschmidt constant of
an arbitrary homogeneous  ideal in $\mathbb{K}[x_0,x_1,\dots,x_n]$
is not an easy task. In particular, classifying the zero dimensional
subschemes $Z$, based on asymptotic measures such as
$\widehat{\alpha}(I(Z))$, is much more difficult. However, whenever
$Z$ is a reduced scheme, i.e., $m_i=1$ for all $i=1, \dots ,n$, some
results have been obtained in this direction. For example, the Main
Theorem of \cite{DST}, classifies all configurations of reduced
points $Z$ in $\mathbb{P}^2$ for which $\widehat{\alpha}(I(Z)) <
9/4$. In particular, \cite[Theorem A]{MH} identifies those
configurations of reduced points $Z$ in $\mathbb{P}^2$ with
$\widehat{\alpha}(I(Z))=2$, in more details. Moreover, in
\cite[Theorem 2.3]{FGHLMS}, this classification is extended up to
$5/2$.

Our contribution, in this section, is to give a geometric
classification of
 all non-reduced fat points subschemes $Z=m_1p_1 + \dots + m_np_n$,
i.e., those subschemes of $\mathbb{P}^2$ for  which there exists at
least one $i=1, \dots , n$ such that $m_i \geq 2$,
  and $\widehat{\alpha}(I(Z))<5/2$.
To do this, we need to use  some notions from divisor theory on a
smooth projective surface, which is obtained by blowing up  of
$\mathbb{P}^2$ at a  finite set of points. For completeness of the
exposition, we recall some preliminary notions of this theory.

Let $\{ p_1, \dots , p_n \}$ be a finite set of points in
$\mathbb{P}^2$
and let $\pi : X \rightarrow \mathbb{P}^2 $ be 
the blowing up of $\mathbb{P}^2$ at these points. Let
$E_i=\pi^{-1}(p_i)$, with $i=1, \dots,n$, be the exceptional
divisors on $X$ 
and let $L$ denote the pull back of a general line in $\mathbb{P}^2$ in $X$. 
The divisors $L, E_1, \dots, E_n$ give an orthogonal basis for the
divisor class group $\mathrm{CL}(X)$ of $X$ such that
$L^2=-E_i^2=1$. Moreover, recall that a divisor $F$ in
$\mathrm{CL}(X)$ is called nef if $F.D \geq 0$ for every effective
divisor $D$. In addition, if $F=C_1 + \dots +C_r$, where $C_i$'s are
irreducible curves (not necessarily reduced) in $X$, then $F$ is nef
if and only if $F.C_i \geq 0$, for all $i=1, \dots ,r$.

We use the above facts to state the following lemma, which has a key
role in the proof of the main theorem of this section.

\begin{lemma}\label{lemma}
Let $Z=m_1p_1 + \dots + m_np_n$ and $Z^\prime=m^\prime_1p_1 + \dots
+ m^\prime_rp_r$ be two fat points subschemes of distinct points in
$\mathbb{P}^2$, where $r \leq n$ and $1 \leq m_i^\prime \leq m_i$
for all $i=1, \dots, r$. Let $X$ be the blow up of $\mathbb{P}^2$ at
the points $p_1, \dots, p_r$, with orthogonal basis $L, E_1, \dots,
E_r$. Also, let for some integers $t$ and $t_i$, with $i=1, \dots,
r$, the divisor $F=tL-(t_1E_1 + \dots +t_rE_r)$ is nef  on $X$. Then
\begin{center}
$\widehat{\alpha}(I(Z)) \geq \widehat{\alpha}(I(Z^\prime)) \geq
\frac{m_1^\prime t_1 + \dots + m_r^\prime t_r}{t}$.
\end{center}
\end{lemma}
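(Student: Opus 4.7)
\textbf{Proof proposal for Lemma \ref{lemma}.}

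The first inequality $\widehat{\alpha}(I(Z))\ge \widehat{\alpha}(I(Z'))$ is the easy half: since $r\le n$ and $m_i'\le m_i$ for each $i\le r$, the scheme $Z'$ is a subscheme of $Z$, so $I(Z)\subseteq I(Z')$ and, more generally, $I(Z)^{(k)}\subseteq I(Z')^{(k)}$ for every $k\ge 1$. Therefore $\alpha(I(Z)^{(k)})\ge \alpha(I(Z')^{(k)})$, and dividing by $k$ and passing to the limit (or the infimum) gives the desired inequality on Waldschmidt constants.

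For the nontrivial inequality $\widehat{\alpha}(I(Z'))\ge (m_1't_1+\dots+m_r't_r)/t$, I would use the standard intersection-theoretic technique on the blow-up $X$. Fix $m\ge 1$ and let $G$ be a homogeneous form of degree $d=\alpha(I(Z')^{(m)})$ in $I(Z')^{(m)}$. Because $Z'$ is supported at the smooth points $p_1,\dots,p_r$, the condition $G\in I(Z')^{(m)}$ is equivalent to saying that $G$ vanishes to order at least $m\,m_i'$ at each $p_i$, i.e.\ $\mathrm{mult}_{p_i}(V(G))\ge m\,m_i'$. Writing $\mu_i=\mathrm{mult}_{p_i}(V(G))$, the proper transform of the plane curve $V(G)$ on $X$ is represented by the effective divisor
\begin{equation*}
D_G \;=\; dL-\mu_1E_1-\dots-\mu_rE_r,
\end{equation*}
with $\mu_i\ge m\,m_i'$ for all $i$.

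Since $F$ is nef and $D_G$ is effective, one has $F\cdot D_G\ge 0$. Using the intersection relations $L^2=1$, $E_i\cdot E_j=-\delta_{ij}$, $L\cdot E_i=0$, this becomes
\begin{equation*}
0\;\le\; F\cdot D_G \;=\; t\,d-\sum_{i=1}^{r}t_i\mu_i \;\le\; t\,d-m\sum_{i=1}^{r}t_i m_i',
\end{equation*}
so that $d\ge (m/t)\sum_{i=1}^{r}m_i' t_i$. Dividing by $m$ and taking the infimum over $m\ge 1$ gives $\widehat{\alpha}(I(Z'))\ge \frac{m_1't_1+\dots+m_r't_r}{t}$.

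The only potential obstacle is making the identification between $G\in I(Z')^{(m)}$ and the multiplicity condition $\mathrm{mult}_{p_i}(V(G))\ge m\,m_i'$; this is standard because each $p_i$ is a smooth point of $\mathbb{P}^2$, so the symbolic power of the maximal ideal agrees with the ordinary power and measures order of vanishing. Everything else reduces to the positivity $F\cdot D_G\ge 0$ granted by nefness.
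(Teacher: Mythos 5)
Your proposal is correct and follows essentially the same approach as the paper: the containment $I(Z)\subseteq I(Z')$ for the first inequality, and intersecting the nef divisor $F$ with the effective divisor on $X$ associated to a minimal-degree element of $I(Z')^{(m)}$ for the second. Your version is in fact slightly more careful, since you work with the actual multiplicities $\mu_i\ge m\,m_i'$ of the proper transform rather than asserting directly (as the paper does) that $\alpha(I(Z')^{(m)})L-m(m_1'E_1+\dots+m_r'E_r)$ is effective.
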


\begin{proof}
Let $G$ be an element of $I(Z)$. Since we have assumed that
$m_i^\prime \leq m_i$, with $i=1, \dots, r$, hence $G$ vanishes on
$Z^\prime$ too. Therefore, $G \in I(Z^\prime)$, which implies $I(Z)
\subseteq I(Z^\prime)$. In particular, $\widehat{\alpha}(I(Z)) \geq
\widehat{\alpha}(I(Z^\prime))$.

To show that  the second inequality holds too,  consider the
effective divisor $D=\alpha(I(Z^\prime)^{(m)})L -m(m^\prime_1E_1 +
\dots + m^\prime_r E_r)$ for some positive integer $m$. Since $F$ is
nef and $D$ is an effective divisor, $F.D \geq 0$. Thus
$\alpha(I(Z^\prime)^{(m)}) \geq m \frac{m_1^\prime t_1 + \dots +
m_r^\prime t_r}{t}$. In particular, $\widehat{\alpha}(I(Z^\prime))
\geq \frac{m_1^\prime t_1 + \dots + m_r^\prime t_r}{t}$.
\end{proof}

By now, we have provided the necessary tools to prove the main
theorem of this
section.
\begin{theorem}{\bf (Theorem B)}.
Let $Z$ be a finite set of non-reduced points in $\mathbb{P}^2$ with
defining ideal $I(Z)$. Then $\widehat{\alpha}(I(Z))<5/2$ if and only
if $Z$
\begin{enumerate}
\item[a)]consists of $r$ double points, plus $s$ simple points,
where $r \geq 1$ and $s \geq 0$, such that all of these points are
contained in a line or;
\item[b)]consists of $r + s$ simple points,
where $r,s \geq 1$, plus one double point, such that $r$ simple
points out of them  lie on a line $L_1$, and the remaining points
lie on another line $L_2$ and the single double point would be at
the intersection of these two lines, or; \item[c)]consists of three
simple points, plus one double point, such that the simple points
out of it lie on a line $L_1$, and the double point lies on another
line $L_2$.
\end{enumerate}
\end{theorem}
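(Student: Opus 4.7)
The plan is to prove both directions of the biconditional separately, using Lemma \ref{lemma} as the main tool for lower bounds and explicit polynomials for upper bounds.

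For the sufficiency direction, I verify that each of (a), (b), (c) has the claimed Waldschmidt constant. For (a) with all points on a line $L$, the polynomial $\ell_L^{2m}$ vanishes to order $2m$ at every point of $L$, hence $\ell_L^{2m} \in I(Z)^{(m)}$ and $\widehat{\alpha}(I(Z)) \leq 2$; the containment $2p_1 \subseteq Z$ provides the reverse inequality. For (b) with $p_0 = L_1 \cap L_2$, the polynomial $(\ell_{L_1}\ell_{L_2})^m$ vanishes to order $2m$ at $p_0$ and to order $m$ along each $L_i$, giving $\widehat{\alpha}(I(Z)) \leq 2$. For (c), setting $M_j = \overline{p_1 p_j}$ for $j=2,3,4$, the polynomial $M_2^2 M_3^2 M_4^2 \ell_{L_1}$ has degree $7$ and lies in $I(Z)^{(3)}$, so $\widehat{\alpha}(I(Z)) \leq 7/3$; the matching lower bound $\widehat{\alpha}(I(Z)) \geq 7/3$ follows from Lemma \ref{lemma} applied to the class $F = 3L - 2E_1 - E_2 - E_3 - E_4$ on the blowup of the four points, which I verify is nef by checking $F \cdot E_i \geq 0$, $F \cdot (L - E_1 - E_j) = 0$ for $j=2,3,4$, and $F \cdot (L - E_2 - E_3 - E_4) = 0$.

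For the necessity direction I argue by contrapositive: assume $\widehat{\alpha}(I(Z)) < 5/2$. First, every point has multiplicity at most $2$, since a subscheme $3p \subseteq Z$ together with the nef class $F = L - E_p$ forces $\widehat{\alpha}(I(Z)) \geq 3$ by Lemma \ref{lemma}. Let $r \geq 1$ denote the number of double points. If $r \geq 2$ and the support of $Z$ does not lie on a single line, I pick two double points $p_1, p_2$ and a third point $q$ of $Z$ (double or simple) with $q \notin \overline{p_1p_2}$; the nef conic class $F = 2L - E_1 - E_2 - E_q$ on the blowup of these three non-collinear points, applied via Lemma \ref{lemma} to the subscheme $Z' = 2p_1 + 2p_2 + q \subseteq Z$, yields $\widehat{\alpha}(I(Z)) \geq (2+2+1)/2 = 5/2$, a contradiction. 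Thus when $r \geq 2$ the configuration is form (a).

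The remaining case is $r = 1$: write $Z = 2p_0 + p_1 + \dots + p_s$ and let $k$ denote the number of distinct lines through $p_0$ containing at least one simple point. For $k \leq 1$ all points of $Z$ are collinear (form (a)), and for $k=2$ we are in form (b). When $k \geq 3$ I pick simple points $q_1, q_2, q_3$ from three distinct lines through $p_0$. If these three are not themselves collinear, then $\{p_0, q_1, q_2, q_3\}$ has no three collinear and Lemma \ref{lemma} applied to $Z' = 2p_0 + q_1 + q_2 + q_3$ with $F = 2L - E_0 - E_{q_1} - E_{q_2} - E_{q_3}$ (nef since $F^2 = 0$ and $F$ meets every $(-1)$-line $L - E_i - E_j$ in zero) gives $\widehat{\alpha}(I(Z)) \geq 5/2$. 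Otherwise every such triple is collinear; a short combinatorial argument (comparing two such triples that share two points, and using $\ell_i \cap \ell_j = \{p_0\}$) forces each line through $p_0$ to carry exactly one simple point, with all $s = k$ simple points lying on a common line $L \not\ni p_0$. If $s=3$ this is form (c), while if $s \geq 4$ I apply Lemma \ref{lemma} to $Z' = 2p_0 + p_1 + p_2 + p_3 + p_4$ with $F = 4L - 3E_0 - E_1 - E_2 - E_3 - E_4$, which is nef because $F \cdot (L - E_0 - E_i) = 0$ and $F \cdot (L - E_1 - E_2 - E_3 - E_4) = 0$, yielding ratio $10/4 = 5/2$ and again contradicting the hypothesis.

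The main obstacle is to identify the correct \emph{borderline} subschemes whose Waldschmidt constants are exactly $5/2$, namely $2p + 2p' + q$ with three non-collinear points, $2p_0 + q_1+q_2+q_3$ in general position, and $2p_0+q_1+\dots+q_4$ with the four simples on a line off $p_0$; and then to verify nefness of the candidate divisor classes. The most delicate verification is the last one, where the line through the four collinear simple points has proper transform $L - E_1 - E_2 - E_3 - E_4$ with self-intersection $-3$, so the chosen coefficients $(a, b_0, b_1, \dots, b_4) = (4, 3, 1, 1, 1, 1)$ must be tuned precisely to annihilate this intersection while still keeping the ratio at $5/2$.
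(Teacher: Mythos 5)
Your proposal is correct and follows essentially the same strategy as the paper: Lemma~\ref{lemma} applied to nef divisor classes on blowups for the lower bounds (the conic class $2L-E_0-E_1-E_2-E_3$ through four points in general position, the class $2L-E_1-E_2-E_3$ for two double points plus a third non-collinear point, and $4L-3E_0-E_1-\cdots-E_4$ for the five-point borderline case, each giving the threshold $5/2$), together with the same explicit products of linear forms for the upper bounds, including $M_2^2M_3^2M_4^2\ell_{L_1}\in I(Z)^{(3)}$ for case~c). The only difference is organizational -- you stratify by the number $r$ of double points and then by the number $k$ of lines through the unique double point, rather than the paper's initial reduction to ``support on a line or a reducible conic'' -- and your combinatorial argument forcing the simple points onto a single line off $p_0$ spells out a step the paper only asserts (``one can observe that $Z$ contains a fat points subscheme supported at four points in general position''), so the substance of the proof is the same.
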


\begin{proof}
Let $Z=m_1p_1 + \dots + m_np_n$ be a non-reduced fat points
subscheme of $\mathbb{P}^2$ with $\widehat{\alpha}(I(Z))=2$ and let
$S(Z) =\{ p_1, \dots, p_n \}$ be the support of $Z$. Since we have
assumed that $Z$ is non-reduced,  there exists at least one $i \in
\{1, \dots , n\}$, such that $m_i \geq2$. Now let $Z_i=m_ip_i$. This
immediately implies $\widehat{\alpha}(I(Z_i))=m_i$. By Lemma
\ref{lemma}, $\widehat{\alpha}(I(Z)) \geq
\widehat{\alpha}(I(Z_i))=m_i \geq 2$ and by our assumption,
$\widehat{\alpha}(I(Z))<5/2$. These two inequalities yield $m_i=2$.
Thus, the points of $Z$ are simple points or double points, i.e., $1
\leq m_i \leq 2$, and at least one of them is a double point.

Let $X \longrightarrow \mathbb{P}^2$ be the blowing up of
$\mathbb{P}^2$ at the points $p_1, \dots,p_n$. 
Also, let $L$ be the pullback of a general line in $\mathbb{P}^2$
and let
$E_i=\pi^{-1}(p_i)$. To complete the proof, we proceed to  
several cases.

In the first case, we show that the points of $S(Z)$ lie on a line
or lie on a reducible conic. To do this, it is enough to verify $Z$
can not contain any subscheme consisting of four non-reduced points,
where the points of its support, are in general position.  Suppose,
on the contrary, by renumbering the points if necessary,
$W=m_1p_1+\dots +m_4p_4$ be a non-reduced fat points subscheme of
$Z$, where its support lies on an irreducible conic, say $C$. Since
$W$ is non-reduced  it has at least one double point. Without loss
of generality, we may assume that $m_1=2$. Let
$W^\prime=2p_1+p_2+p_3+p_4$. Now let $\widetilde{C}=
2L-E_1-E_2-E_3-E_4$ be the proper transform of $C$ passing  through
$p_1, \dots, p_4$. Since $\widetilde{C}$ is a prime divisor with
non-negative self-intersection. Hence, it would be nef. Now by Lemma
\ref{lemma}, we have $\widehat{\alpha}(I(Z)) \geq
\widehat{\alpha}(I(W)) \geq \widehat{\alpha} (I(W^\prime)) \geq
\frac{5}{2}$, which contradicts the assumption
$\widehat{\alpha}(I(Z))<5/2$.

By the preceding argument, $Z$ has at least one double point, and
the points of $S(Z)$ lie on a line or lie on a reducible conic. When
these points are collinear, $Z$ is the case $a)$, and we are done.
Thus, for the remainder of the proof, we let the points of $S(Z)$
lie on
a reducible conic. 
In what follows, we show that $Z$ has exactly one double point. On
the contrary, let $Z$ have at least two double points. Without loss
of generality, let $m_1=m_2=2$. Let $Z^\prime$ be the fat points
subscheme $Z^\prime=2p_1+2p_2+p_3$. This situation may be depicted
in the following figure.

\begin{center}
\begin{tikzpicture}
\draw [-] [line width=1.pt] (0,0)--(2.5,0); \draw [-] [line
width=1.pt] (.5,-.5)--(.5,2); \draw [black] (2.8,.1) node{$L_1$};
\draw [black] (.8,2) node{$L_2$}; \shade[ball color=blue] (.5,0)
circle (3pt); \draw [blue] (.9,.4) node{$p_1$}; \shade[ball
color=blue] (1.5,0) circle (3pt); \draw [blue] (1.7,.4) node{$p_2$};
\shade[ball color=blue] (.5,1) circle (1.5pt); \draw [blue] (.9,1)
node{$p_3$}; \draw [thick](1.3,-1.) node {$\mbox{Figure 2} $};
\end{tikzpicture}
\end{center}
Let $G=2L-E_1-E_2-E_3=\widetilde{L}_1+\widetilde{L}_2+E_1$, where
$\widetilde{L}_1=L-E_1-E_2$ is the proper transform of the $L_1$
where passes through $p_1$ and $p_2$ and $\widetilde{L}_2=L-E_1-E_3$
is the proper transform of the $L_2$ where passes through $p_1$ and
$p_3$. Since $G$ is a sum of prime divisors, each of which $G$ meets
non-negatively, thus it is nef and therefore, by Lemma \ref{lemma},
we have $\widehat{\alpha} (I(Z)) \geq \widehat{\alpha} (I(Z^\prime))
\geq \frac{5}{2}$, which contradicts our assumption.

Based on the above arguments, the points of $S(Z)$ lie on a
reducible conic. Thus, at least one of the points of $S(Z)$ lies on
a line, and the other points lie on another distinct line. Moreover,
only one of the points of $Z$ is double. If this double point is at
the intersection of these two lines, $Z$ would be  the case $b)$,
and we are done. Otherwise, $|S(Z)| = n \geq 4$ and two cases may
arise.

 In the first case, let at least two points of $S(Z)$
lie on a line, and the remaining points  lie on another line. In
this case, one can observe that $Z$ contains a (non-reduced) fat
points subscheme supported at four points in general position, which
by  what is argued in the second paragraph of this proof, again
$\widehat{\alpha}(I(Z)) \geq 5/2$, which gives a contradiction. In
the second case, let all points of $S(Z)$, except one of them, which
is a double point of $Z$, lie on a line, i.e., $Z=2p_1+p_2+\dots +
p_n$. This case may be depicted in the following figure.

\begin{center}
\begin{tikzpicture}
\draw [-] [line width=1.pt] (-.5,0)--(4.1,0); \draw [-] [line
width=1.pt] (0,-.5)--(0,1.5); \shade[ball color=blue] (0,.8)
circle(3pt); \draw [blue] (.47,.75) node{$2p_1$};
\shade[ball color=blue] (.8,0) circle (1.5pt); \draw [blue]
(0.8,-.3) node{$p_2$}; \shade[ball color=blue] (1.6,0) circle
(1.5pt); \draw [blue] (1.6,-.3) node{$p_3$};
\draw [blue] (2.4,-.3) node{$\dots$}; \shade[ball color=blue] (3,0)
circle (1.5pt); \draw [blue] (3,-.3) node{$p_n$}; \draw
[thick](2.2,-1.1) node {$\mbox{Figure 3} $};
\end{tikzpicture}
\end{center}
If $n=4$, then $Z$ would be the case $c)$, and we are done. So, we
may assume $n \geq 5$. Now let
\begin{center}
$H=(n-1)L-(n-2)E_1-E_2-\dots -E_n=\widetilde{L}_2+ \dots
+\widetilde{L}_n+E_1$,
\end{center}
where $\widetilde{L}_i=L-E_1-E_i$, with $i=2, \dots ,n$, is the
proper transform of the line passing through $p_1$ and $p_i$. Since
$H$ is the sum of prime divisors, $\widetilde{L}_i$'s and $E_1$, and
each of them meets $H$ non-negatively, thus it is a nef divisor.
Now, by Lemma \ref{lemma}, we have
\begin{equation}\label{88}
\widehat{\alpha}(I(Z)) \geq \frac{3n-5}{n-1}=3-\frac{2}{n-1}.
\end{equation}
Since we have assumed that $n \geq 5$, one can observe that
$\widehat{\alpha}(I(Z)) \geq \frac{5}{2}$, which contradicts our
assumption.

The converse of the theorem for the case of $a)$ and $b)$ follows by
Remark \ref{remark}. For the case of $c)$, let $Z=2p_1+p_2+p_3+p_4$,
and let $L_{1i}$ be the line passing through $p_1$ and $p_i$. Since
$(L_{12}L_{13} L_{14})^2L_1$ is an element of $I(Z)^{(3)}$, so
$\widehat{\alpha}(I(Z)) \leq 7/3$. Also by \eqref{88}, we have
$\widehat{\alpha}(I(Z)) \geq 7/3$.
\end{proof}

\begin{remark}
Let $W^{'}=2p_1+p_2+p_3+p_4$, $Z^{'}=2p_1+2p_2+p_3$ and
$Z=2p_1+p_2+p_3+p_4+p_5$ be three fat points subschemes of
$\mathbb{P}^2$,  which are illustrated in the following figures.

\begin{center}
\begin{tikzpicture}
\draw [-] [line width=1.pt] (-.5,0)--(2,0); \draw [-] [line
width=1.pt] (0,-.5)--(0,2); \shade[ball color=blue] (0,.7)
circle(3pt); \draw [blue] (.47,.65) node{$2p_1$}; \shade[ball
color=blue] (0,1.4) circle(1.5pt); \draw [blue] (.35,1.3)
node{$p_2$}; \shade[ball color=blue] (.7,0) circle(1.5pt); \draw
[blue] (.7,-.3) node{$p_3$}; \shade[ball color=blue] (1.4,0)
circle(1.5pt); \draw [blue] (1.4,-.3) node{$p_4$};
\draw [-] [line width=1.pt] (3,0)--(5.5,0); \draw [-] [line
width=1.pt] (3.5,-.5)--(3.5,2); \shade[ball color=blue] (3.5,.7)
circle(3pt); \draw [blue] (3.97,.65) node{$2p_1$}; \shade[ball
color=blue] (4.2,0) circle(3pt); \draw [blue] (4.2,-.33)
node{$p_2$}; \shade[ball color=blue] (4.9,0) circle(1.5pt); \draw
[blue] (4.9,-.3) node{$p_3$};
\draw [-] [line width=1.pt] (6.5,0)--(10.3,0); \draw [-] [line
width=1.pt] (7,-.5)--(7,2); \shade[ball color=blue] (7,.7)
circle(3pt); \draw [blue] (7.5,.65) node{$2p_1$}; \shade[ball
color=blue] (7.7,0) circle(1.5pt); \draw [blue] (7.7,-.3)
node{$p_2$}; \shade[ball color=blue] (8.4,0) circle(1.5pt); \draw
[blue] (8.4,-.3) node{$p_3$}; \shade[ball color=blue] (9.1,0)
circle(1.5pt); \draw [blue] (9.1,-.3) node{$p_4$}; \shade[ball
color=blue] (9.8,0) circle(1.5pt); \draw [blue] (9.8,-.3)
node{$p_5$};
\draw [thick](1,-1.1) node {$\mbox{Subscheme $W^{'}$} $}; \draw
[thick](4.4,-1.1) node {$\mbox{Subscheme $Z^{'}$} $}; \draw
[thick](8.2,-1.1) node {$\mbox{Subscheme $Z$} $};
\end{tikzpicture}
\end{center}

By the proof of Theorem B, the Waldschmidt constant of each  of
these subschemes is at least $5/2$. It is easy to see that their
Waldschmidt constants are equal to $5/2$. By an argument similar to
the one given in the proof of the converse of the above theorem, it
can be easily seen that the Waldschmidt constant of each one of
these schemes is $\frac{5}{2}$. Also, one can see that there exists
another fat points subscheme $Z$ in $\mathbb{P}^2$ such that
$\widehat{\alpha}(I(Z))=5/2$. For example, let
$W^{''}=W^{'}+q=2p_1+p_2+p_3+p_4+q$, where $q$ is a simple point at
the intersection of two lines. Even though the Waldschmidt constant
of these four subschemes is equal to $5/2$,  trivially, they are not
isomorphic. These observations imply that the classification of all
fat points subschemes $Z$ in $\mathbb{P}^2$ with
$\widehat{\alpha}(I(Z)) \geq 5/2$ might require some new methods to
investigate this problem systematically, and it is the reason 
why we restrict ourselves to the classification of fat points
subschemes with the Waldschmidt constant  less than $5/2$. We hope,
by using new methods, to come back to this issue shortly soon.
\end{remark}

\end{document}